\newcommand{\Gal}{\operatorname{Gal}}
\newcommand{\Res}{\operatorname{Res}}
\newcommand{\Irr}{\operatorname{Irr}}
\newcommand{\Tr}{\operatorname{Tr}}
\newcommand{\Nr}{\operatorname{Nr}}
\newcommand{\GL}{\operatorname{GL}}
\newcommand{\id}{\operatorname{id}}
\newcommand{\Th}{\operatorname{th}}
\newcommand{\F}{\mathcal{F}}
\newcommand{\OO}{\mathcal{O}}
\newcommand{\QQ}{{\mathbb Q}}
\newcommand{\ZZ}{{\mathbb Z}}
\newcommand{\NN}{{\mathbb N}}
\newcommand{\HH}{{\mathbb H}}
\newcommand{\eee}{\hfill$\Box$}
\newtheorem{theorem}{Theorem}[section]
\newtheorem{lemma}[theorem]{Lemma}
\newtheorem{corollary}[theorem]{Corollary}
\newtheorem{example}[theorem]{Example}
\begin{document}
\title{Explicit Constructions of the non-Abelian
$\mathbf{p^3}$-Extensions Over $\mathbf{\QQ}$}
\author{Oz Ben-Shimol}
\subjclass{12F12; 11R18}
\keywords{Constructive Galois Theory, Heisenberg group, Explicit Embedding problem}
\maketitle
%
%
\begin{abstract}
Let $p$ be an odd prime.
Let $F/k$ be a cyclic extension of degree $p$ and of characteristic different from $p$.
The explicit constructions of the non-abelian $p^{3}$-extensions over $k$, are induced by certain elements in ${F(\mu_{p})}^{*}$.
In this paper we let $k=\QQ$ and present sufficient conditions for these elements to be suitable for the constructions.
Polynomials for the non-abelian groups of order $27$ over $\QQ$ are constructed.
\end{abstract}
\maketitle
%
%
\section{Introduction}
Let $p$ be an odd prime.
Let $F/\QQ$ be a $C_{p}$-extension, $K=\QQ(\zeta_{p})$, $L=F(\zeta_{p})=FK$, where $\zeta_{p}$ is a fixed $p^{\Th}$ root of unity.
Let $\bar{\sigma}$ be a generator for $\Gal(L/K)$. The restriction $\sigma=\bar{\sigma}|_{F}$ generates $\Gal(F/\QQ)\cong\Gal(L/K)$.
Similarly, let $\bar{\tau}$ be a generator for $\Gal(L/F)$.
The restriction $\tau=\bar{\tau}|_{K}$ generates $\Gal(K/\QQ)\cong\Gal(L/F)$.
$L/\QQ$ is cyclic of order $p(p-1)$, generated by $\bar{\sigma}\bar{\tau}$.
\\
\indent
Let $e$ be a primitive root modulo $p$.
We have the multiplicative homomorphisms [4, p.130, Corollary 8.1.5]
$$
    \left\{
    \begin{array}{l}
        \Phi=\Phi_{L/F}:L^{*}\to L^{*} \\
        \Phi_{L/F}(x)=x^{e^{p-2}}\bar{\tau}x^{e^{p-3}}\cdots\bar{\tau}^{p-2}x, \ \ \ x\in L^{*},
    \end{array}
    \right.
$$
and its restriction $\Phi_{K/\QQ}:K^{*}\to K^{*}$. Clearly, $\Phi$ commutes with the norm:
$$
\Nr_{L/K}\Phi_{L/F}=\Phi_{K/\QQ}\Nr_{L/K}.
$$
\indent Let $\HH_{p^{3}}$ and $C_{p^{2}}\rtimes C_{p}$ be the non-abelian groups of order $p^3$.
From the generic construction of the non-abelian $p^{3}$-extensions (over fields with characteristic different from $p$), it follows that the elements $x\in L^{*}$ with $\Phi(\Nr_{L/K}(x))\not\in {L^{*}}^p$ induce $\HH_{p^{3}}$-extensions over $\QQ$, while the elements $x\in L^{*}$ with $\Phi(\zeta_{p}\Nr_{L/K}(x))\not\in{L^{*}}^p$ induce $C_{p^{2}}\rtimes C_{p}$-extensions over $\QQ$, cf. Theorem 4.1 and Theorem 4.2.
\\
\indent
Arne Ledet, in [3],[4], and [5, pages 130-132], constructs explicit extensions and the corresponding polynomials over $\QQ$ for these groups, when $p=3$ or $p=5$, and $L=\QQ(\zeta_{p^2})$.
He provides a sufficient condition for an element $x\in\OO_{L}$ to satisfy the above requirements, namely:
$\Nr_{L/\QQ}(x)=q$ is an unramified prime in $L$ (that is, $q\neq 3$, $q\neq 5$, respectively).
His argument for the sufficiency relies on unique factorization in $\OO_{K}$.
\\
\indent
Our goal is to generalize this idea for every $p$ and for every $C_{p}$-extension $F/\QQ$ the construction starts with (in [4], the construction starts with a $C_3$-extension inside $\QQ_{9}$).
More precisely, given a $C_{p}$-extension $F/\QQ$, we provide sufficient conditions for an element $x\in L^{*}$ to satisfy $\Phi(\Nr_{L/K}(x))\not\in{L^{*}}^p$ for $\HH_{p^{3}}$, and $\Phi(\zeta_{p}\Nr_{L/K}(x))\not\in{L^{*}}^p$ for $C_{p^{2}}\rtimes C_{p}$.
In section 4 we explain the explicit constructions for those two groups, and show that there are infinitely many elements $x\in L^{*}$, suitable for both constructions.
In section 5, we exhibit other polynomials for $\HH_{27}$ and $C_{9}\rtimes C_{3}$ over~$\QQ$.
%
%
\section{Notation and Definitions}
\noindent
$\bullet$ \ The number ring of a given number field $N$ is denoted by $\OO_N$.
\vskip 0.1cm \noindent
$\bullet$ \ If $E$ is a finite group and $N/k$ is a Galois extension with Galois group $\Gal(N/k)\cong E$, we then say that $N/K$ is an $E$-extension, or that $N$ is an $E$-extension over $k$.
Any polynomial $f$ over $k$ with splitting field $N$ is called $E$-polynomial over $k$.
\vskip 0.1cm \noindent
$\bullet$ \ A cyclic group of order $m$ will denoted by $C_m$.
\vskip 0.1cm \noindent
$\bullet$ \ As in the introduction:
$F/\QQ$ is a $C_p$-extension, $K=\QQ(\zeta_{p})$ and $L=FK$ -- their compositum.
Let $\bar{\sigma}$ be a fixed generator for $\Gal(L/K)$. Thus $\sigma=\bar{\sigma}|_{F}$ generates $\Gal(F/\QQ)\cong\Gal(L/K)$.
Let $\bar{\tau}$ be a fixed generator for $\Gal(L/F)$. Thus $\tau=\bar{\tau}|_{K}$ generates $\Gal(K/\QQ)\cong\Gal(L/F)$.
\vskip 0.1cm \noindent
$\bullet$ \ If $e$ is a primitive root modulo $p$, then we have the homomorphism $\Phi$ defined in the introduction.
\vskip 0.1cm \noindent
$\bullet$ \ Let $\F_{L},\F_{K}$ be the multiplicative groups consisting of the fractional ideals of $L$,$K$, respectively.
We have analogous homomorphisms:
$$
    \left\{
    \begin{array}{l}
        \widetilde{\Phi}_{L/F}:\F_{L}\to \F_{L} \\
        \widetilde{\Phi}_{L/F}(I)=I^{e^{p-2}}\bar{\tau}I^{e^{p-3}}\cdots\bar{\tau}^{p-2}I, \ \ \ I\in\F_{L},
    \end{array}
    \right.
$$
and its restriction
$$
    \left\{
    \begin{array}{l}
        \widetilde{\Phi}=\widetilde{\Phi}_{K/\QQ}:\F_{K}\to \F_{K} \\
        \widetilde{\Phi}_{K/\QQ}(I)=\widetilde{\Phi}_{L/F}(\OO_{L}I)\cap K, \ \ \ I\in\F_{K}.
    \end{array}
    \right.
$$
Clearly, $\Nr_{L/K}\widetilde{\Phi}_{L/F}=\widetilde{\Phi}_{K/\QQ}\Nr_{L/K}$.
[remark: let $I\in\F_{L}$, then $\Nr_{L/K}(I)=\OO_{K}\cap
I\bar{\sigma}I\cdots\bar{\sigma}^{p-1}I$ (see Marcus [6] or Ribenboim [8])].
\vskip 0.1cm \noindent
$\bullet$ \ Let $x\in L^{*}$ with $\Nr_{L/\QQ}(x)=\pm q_{1}^{l_1}\cdots q_{n}^{l_n}$, \ $n>0$, \ $l_i\in\ZZ-\{0\}$, and the $q_i$'s are distinct rational primes.
We denote by $J_{i}(x)$, $i=1,\ldots n$, the fractional ideals of $L$ which satisfy:
$$
\OO_{L}x=J_{1}(x)\cdots J_{n}(x) \ \ \text{and} \ \ \Nr_{L/\QQ}(J_{i}(x))=\ZZ q_{i}^{l_{i}}.
$$
$\bullet$ \ For each $i$ we set $I_{i}(x):=\Nr_{L/K}(J_i(x))$. Clearly, $\Nr_{K/\QQ}(I_{i}(x))=\ZZ q_{i}^{l_{i}}$.
\vskip 0.1cm \noindent
$\bullet$ \ If for some $i$, $q_{i}$ splits completely in $K$ and $I_{i}(x)=P_{1}^{\beta_{1}}P_{2}^{\beta_{2}}\cdots P_{p-1}^{\beta_{p-1}}$, where the $P_{j}$'s are the distinct prime ideals in $K$ lying above $q_{i}$, then
we define $\chi(I_{i}(x))$ to be the integer
$$
    \chi(I_{i}(x))=e^{p-2}\beta_{1}+e^{p-3}\beta_{p-1}+\ldots e\beta_{3}+\beta_{2}.
$$
%
%
\section{The Main Results}
%
%
%
\begin{theorem}\label{criteria}
Let $x\in L^{*}$, \ $\Nr_{L/\QQ}(x)=\pm q_{1}^{l_1}\cdots q_{n}^{l_n}$, \ $n\geq 0$, \ $l_i\in\ZZ-\{0\}$, \ $q_i$ a prime.
Then $\Phi(\Nr_{L/K}(x))$ does not generate a $p^{\Th}$ power (fractional) ideal of $L$,
if and only if $n>0$ and there exists an $i$ such that \\
\textbf{(a)} \ $q_{i}$ splits completely in $L$. \\
\textbf{(b)} \ $p$ does not divide $\chi(I_{i}(x))$.
\end{theorem}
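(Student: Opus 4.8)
The plan is to translate this element-level divisibility statement into an ideal-theoretic one and then decide it prime-by-prime. First I would replace $\Phi(\Nr_{L/K}(x))$ by the ideal it generates. Since $\Phi_{K/\QQ}=\Phi_{L/F}|_{K^{*}}$, for $y\in K^{*}$ one has $\OO_{K}\Phi_{K/\QQ}(y)=\widetilde\Phi_{K/\QQ}(\OO_{K}y)$, and $\OO_{K}\Nr_{L/K}(x)=\Nr_{L/K}(\OO_{L}x)$. Combining these with the factorization $\OO_{L}x=\prod_{i}J_{i}(x)$, the multiplicativity of $\Nr_{L/K}$ and $\widetilde\Phi_{K/\QQ}$, and $I_{i}(x)=\Nr_{L/K}(J_{i}(x))$, I get $\OO_{L}\Phi(\Nr_{L/K}(x))=\prod_{i}\widetilde\Phi_{K/\QQ}(I_{i}(x))\OO_{L}$. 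The ideals attached to distinct primes $q_{i}$ have coprime support in $\F_{L}$, so the product is a $p^{\Th}$ power in $\F_{L}$ if and only if each factor $\widetilde\Phi_{K/\QQ}(I_{i}(x))\OO_{L}$ is; the case $n=0$ gives the unit ideal, a trivial $p^{\Th}$ power. Thus it suffices to decide, for a fixed $q=q_{i}$, when $\widetilde\Phi_{K/\QQ}(I_{i})\OO_{L}$ fails to be a $p^{\Th}$ power.

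The heart of the matter is the local computation in $\F_{K}$. Writing $I_{i}=\prod_{P}P^{\beta_{P}}$ over the primes of $K$ above $q$, the map $\widetilde\Phi_{K/\QQ}$ is multiplication by $\sum_{m=0}^{p-2}e^{p-2-m}\tau^{m}$, and $\tau(\zeta_{p})=\zeta_{p}^{e}$ permutes these primes cyclically; hence each exponent $v_{P}$ of $\widetilde\Phi_{K/\QQ}(I_{i})$ is a fixed $\ZZ$-linear form in the $\beta$'s. The decisive information is modulo $p$, extracted from $e^{p-1}\equiv 1$ together with $e$ being a primitive root. If $q$ has residue degree $f_{0}>1$ in $K$, the primes fall into $\tau$-orbits of size $g=(p-1)/f_{0}<p-1$, and each $v_{P}$ is a $\beta$-combination of inner sums $\sum_{t=0}^{f_{0}-1}e^{-tg}$, which vanish modulo $p$ because $e^{g}\not\equiv 1$ while $e^{gf_{0}}=e^{p-1}\equiv 1$; thus every $v_{P}\equiv 0\pmod p$ and $\widetilde\Phi_{K/\QQ}(I_{i})$ is a $p^{\Th}$ power in $\F_{K}$. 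The ramified case $q=p$ is similar: there is one $\tau$-fixed prime $\mathfrak p$ and $v_{\mathfrak p}=\beta\sum_{k=0}^{p-2}e^{k}\equiv 0\pmod p$, since $\{e^{k}\}$ runs over all nonzero residues. If instead $q$ splits completely in $K$, label the $p-1$ primes by $P_{j}=\tau^{\,j-1}(P_{1})$ (so $\tau(P_{j})=P_{j+1}$, indices mod $p-1$); then a direct check shows $v_{P_{1}}$ equals precisely the linear form $\chi(I_{i}(x))$, and in general $v_{P_{b}}\equiv e^{\,1-b}\chi(I_{i}(x))\pmod p$. As $e$ is a unit mod $p$, all the $v_{P_{b}}$ are divisible by $p$ simultaneously, so $\widetilde\Phi_{K/\QQ}(I_{i})$ is a $p^{\Th}$ power in $\F_{K}$ exactly when $p\mid\chi(I_{i}(x))$.

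Then I would pass to $\F_{L}$ using two observations. First, if $\widetilde\Phi_{K/\QQ}(I_{i})$ is already a $p^{\Th}$ power in $\F_{K}$, then its extension $\widetilde\Phi_{K/\QQ}(I_{i})\OO_{L}$ is a $p^{\Th}$ power in $\F_{L}$, regardless of ramification. Second, if $q_{i}$ ramifies in $L/K$ then, $[L:K]=p$ being prime, every $P\mid q_{i}$ is totally ramified, $P\OO_{L}=\mathfrak q^{\,p}$, so the extension is automatically a $p^{\Th}$ power. By the previous paragraph, the first observation disposes of the cases $f_{0}>1$ and $q_{i}=p$; and when $q_{i}$ splits completely in $K$ but is inert in $L/K$, the identity $\Nr_{L/K}(\mathfrak q)=P^{\,p}$ forces $I_{i}$ itself to be a $p^{\Th}$ power in $\F_{K}$, so the first observation applies again. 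The second observation covers every $q_{i}\neq p$ ramified in $F$. In all of these situations $q_{i}$ fails to split completely in $L$, so condition \textbf{(a)} fails and the factor is a $p^{\Th}$ power. The sole remaining possibility is that $q_{i}$ splits completely in $L$; then it is unramified with all residue degrees $1$, so $\widetilde\Phi_{K/\QQ}(I_{i})\OO_{L}$ is a $p^{\Th}$ power in $\F_{L}$ if and only if $\widetilde\Phi_{K/\QQ}(I_{i})$ is one in $\F_{K}$, i.e. if and only if $p\mid\chi(I_{i}(x))$. Combining with the reduction of the first paragraph, the full ideal fails to be a $p^{\Th}$ power precisely when some $q_{i}$ satisfies \textbf{(a)} and \textbf{(b)}, which is the assertion.

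I expect the main obstacle to be the bookkeeping in the second paragraph: matching the cyclic $\tau$-action on the primes above $q$ with multiplication by $e$ on $(\ZZ/p)^{*}$, fixing the labeling $P_{j}=\tau^{\,j-1}(P_{1})$ so that the linear form defining $\chi$ is exactly the $P_{1}$-valuation of $\widetilde\Phi_{K/\QQ}(I_{i})$, and pushing the three geometric-sum evaluations through modulo $p$ uniformly; once these are pinned down, the passage from $K$ to $L$ and the final assembly are a routine case analysis on the decomposition of $q_{i}$ in $L/K$.
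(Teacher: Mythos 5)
Your proposal is correct and follows essentially the same route as the paper: reduce to the ideal $\widetilde{\Phi}(I_{i}(x))$ prime-by-prime, kill the non-split cases via the vanishing of the geometric sums $\sum e^{-tg}$ modulo $p$ (plus total ramification and the inert-norm identity $\Nr_{L/K}(\mathfrak{q})=P^{p}$), and in the totally split case identify the valuations of $\widetilde{\Phi}(I_{i}(x))$ with $\chi(I_{i}(x))$ up to units $e^{j}$ modulo $p$. The only differences are organizational (you compute fully in $\F_{K}$ before extending to $\F_{L}$, where the paper interleaves the two), not mathematical.
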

%
%
%
\begin{proof}
We assume $n>0$. Denote $\gamma=\Nr_{L/K}(x)$.
Clearly, $\OO_{K}\gamma=I_{1}(x)\cdots I_{n}(x)$. \
No prime factor of $I_{i}(x)$ is conjugate to any prime factor of $I_j(x)$ \ $(i\neq j)$.
It follows that $\OO_{L}\Phi(\gamma)\in\F_{L}^{p}$ \ if and only if $\OO_{L}\widetilde{\Phi}(I_{i}(x))\in\F_{L}^p$ for every $i=1,\ldots,n$.
\\
\indent
We fix some $i$ and set $q=q_i$, $l=l_i$, $J=J_i(x)$, $I=I_i(x)$. Thus, $J$ is a fractional ideal of $L$, $I=\Nr_{L/K}(J)$, and \ $\Nr_{L/\QQ}(J)=\Nr_{K/\QQ}(I)=\ZZ q^{l}$.
\vskip 0.1cm
We separate the rest of the proof into two cases.
\vskip 0.1cm\noindent
\textbf{Case 1: q does not split completely in L}
\\ \indent
Let $g$ be the decomposition number of $q$ in $K/\QQ$.
Clearly, $\tau^{g}I=I$.
Now,
$$
\widetilde{\Phi}(I)=\prod_{j=0}^{p-2}\tau^{j}I^{e^{p-(j+2)}}=
\prod_{j=1}^{\frac{p-1}{g}}\prod_{i=0}^{g-1}\tau^{i}I^{e^{p-((j-1)g+i+2)}}=
\prod_{i=0}^{g-1}\tau^{i}I^{S_{g}(i)},
$$
where,
$$
    S_g(i)=\sum_{j=1}^{\frac{p-1}{g}}e^{p-((j-1)g+i+2)}
    =e^{p-(i+2)}\cdot\frac{e^{-(p-1)}-1}{e^{-g}-1}.
$$
We see that if $q$ does not split completely in $K$ then $S_{g}(i)\equiv 0\mod p$ for all $i=0,\ldots g-1$.
Therefore, if $q$ does not split completely in $K$ then $\widetilde{\Phi}(I)\in\F_{K}^p$.
\\ \indent
Suppose that $q$ splits completely in $K$. 
If $q$ is ramified in $F$ then $q\equiv 1\mod p$ and any prime ideal $P$ in $K$ lying above $q$ is then totally ramified in $L$, equivalently, $\OO_{L}P\in\F_{L}^{p}$, hence $\OO_{L}I\in\F_{L}^p.$.
\\ \indent
Finally, suppose that $q$ is inert in $F$.
Then any prime in $K$ lying above $q$ is inert in $L$.
Hence, $\bar{\sigma}J=J$, thus $\OO_{L}I=J^{p}\in\F_{L}^{p}$.
\\ \indent
We conclude that if $q$ does not split completely in $L$, then $\OO_{L}\widetilde{\Phi}(I)\in\F_{L}^{p}$.
\vskip 0.1cm \noindent
\textbf{Case 2: q splits completely in L}
\\ \indent
Suppose that $q$ splits completely in $L$, and let $\OO_{K}q=P_{1}\ldots P_{p-1}$ be the prime decomposition of $q$ in $K$.
$\tau$, as a permutation acting on the $(p-1)$-set consisting of the $P_{j}$'s, is the $(p-1)$-cycle $(P_1,\ldots,P_{p-1})$ \ (say).
We shall denote by $\check{\tau}$ the corresponding $(p-1)$-cycle in the symmetric group; $\check{\tau}=(1,\ldots,p-1)$.
Now, if
$$
    I=P_{1}^{\beta_{1}}P_{2}^{\beta_{2}}\cdots P_{p-1}^{\beta_{p-1}},
$$
then
\begin{equation}
    \widetilde{\Phi}(I)=\prod_{j=1}^{p-1}\Phi(P_{j})^{\beta_j}=\prod_{j=1}^{p-1}\prod_{k=0}^{p-2}\bar{\tau}^{k}P_{j}^{e^{(p-(k+2))}\beta_{j}}
    =\prod_{j=1}^{p-1}P_{j}^{\chi_{j}(I)},
\end{equation}
where,
$$
    \chi_{j}(I)=\sum_{k=0}^{p-2}e^{(p-(k+2))}\beta_{\check{\tau}^{-k}(j)}, \ \ \ j=1,\ldots,p-1.
$$
Clearly, the primes $P_{j}$ split completely in L, hence, $\OO_{L}\widetilde{\Phi}(I)\in\F_{L}^{p}$ if and only if $\widetilde{\Phi}(I)\in\F_{K}^{p}$.
Also, $\widetilde{\Phi}(I)\in\F_{K}^{p}$ if and only if $\chi_{j}(I)\equiv 0\mod p$ \ for all $j=1,\ldots,p-1$.
Note that $\chi_{j+1}(I)\equiv e\cdot \chi_{j}(I)\mod p$ \ for all $j$. Therefore, if $p$ divides some $\chi_{j}(I)$, then $p$ must divides $\chi_{1}(I),\ldots,\chi_{p-1}(I)$ .
Since we consider the $\chi_{j}(I)$'s modulo $p$, we take $j=1$ and write $\chi(I)$ instead of $\chi_{1}(I)$.
We conclude that $\OO_{L}\widetilde{\Phi}(I)\in\F_{L}^{p}$ if and only if $p$ divides $\chi(I)$, as required.
\\ \indent
If $n=0$ then the assertion is clear.
\end{proof}
%
%
\begin{corollary}
Let $x\in L^{*}$, \ $\Nr_{L/\QQ}(x)=\pm q_{1}^{l_1}\cdots q_{n}^{l_n}$, \ $n\geq 1$, \ $l_i\in\ZZ-\{0\}$, \ $q_i$ a prime.
Suppose that there exists an $i$ such that $q_{i}$ splits completely in $L$ and $p$ does not divide $\chi(I_{i}(x))$.
Then $\Phi(\Nr_{L/K}(x)), \Phi(\zeta_{p}\Nr_{L/K}(x))\not\in {L^{*}}^p$. \eee
\end{corollary}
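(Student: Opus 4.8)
The plan is to derive both non-membership statements from Theorem \ref{criteria}, which already does all the real work. The hypothesis of the corollary is precisely that conditions \textbf{(a)} and \textbf{(b)} of Theorem \ref{criteria} hold for the chosen index $i$, and since $n\geq 1>0$, the theorem applies verbatim to yield that $\OO_{L}\Phi(\Nr_{L/K}(x))$ is \emph{not} a $p^{\Th}$ power fractional ideal of $L$, i.e. $\OO_{L}\Phi(\Nr_{L/K}(x))\notin\F_{L}^{p}$.

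Next I would record the elementary bridge from ideals to elements. If some $y\in L^{*}$ were a $p^{\Th}$ power, say $y=z^{p}$ with $z\in L^{*}$, then the principal ideal would factor as $\OO_{L}y=(\OO_{L}z)^{p}\in\F_{L}^{p}$. Taking the contrapositive: whenever $\OO_{L}y\notin\F_{L}^{p}$ we may conclude $y\notin{L^{*}}^{p}$. Applying this with $y=\Phi(\Nr_{L/K}(x))$ settles the first assertion immediately.

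For the second assertion the only extra ingredient is that $\zeta_{p}$ is a unit of $\OO_{L}$ together with the multiplicativity of $\Phi$. Since $\Phi(u)=u^{e^{p-2}}\bar{\tau}u^{e^{p-3}}\cdots\bar{\tau}^{p-2}u$ is a product of Galois conjugates of $u$ raised to integer exponents, it carries units to units; in particular $\Phi(\zeta_{p})$ is a unit. Hence $\Phi(\zeta_{p}\Nr_{L/K}(x))=\Phi(\zeta_{p})\,\Phi(\Nr_{L/K}(x))$ differs from $\Phi(\Nr_{L/K}(x))$ only by a unit factor, so the two generate the same principal ideal: $\OO_{L}\Phi(\zeta_{p}\Nr_{L/K}(x))=\OO_{L}\Phi(\Nr_{L/K}(x))\notin\F_{L}^{p}$. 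Invoking the ideal-to-element bridge once more gives $\Phi(\zeta_{p}\Nr_{L/K}(x))\notin{L^{*}}^{p}$.

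There is no serious obstacle; the entire substance is packaged inside Theorem \ref{criteria}. The one point that repays a moment's care is the \emph{directionality} of the passage between ideals and elements: a non-$p^{\Th}$-power ideal forces a non-$p^{\Th}$-power element, but the converse can fail, and it is exactly the valid direction that the argument needs. The use of $\zeta_{p}$ being a unit is what allows the single ideal computation to service both displayed conclusions simultaneously.
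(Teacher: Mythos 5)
Your proposal is correct and is exactly the argument the paper intends: the paper states this corollary without proof because it follows from Theorem \ref{criteria} via the two observations you make explicit, namely that $y\in{L^{*}}^{p}$ forces $\OO_{L}y\in\F_{L}^{p}$ (so a non-$p^{\Th}$-power ideal yields a non-$p^{\Th}$-power element), and that $\Phi(\zeta_{p})$ is a unit so both elements generate the same ideal. Your remark about the directionality of the ideal-to-element passage is well taken; the paper's Example 3.7 illustrates precisely the failure of the converse direction.
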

Let $x\in L^{*}$ which satisfies condition (a) of Theorem \ref{criteria}. Let $q_{i}$ be a prime which splits completely in $L$ with $\Nr_{K/\QQ}(I_{i}(x))=\ZZ q_{i}^{l_{i}}$.
If $P$ is a prime ideal in $K$ lying above $q_{i}$ then
$\Nr_{K/\QQ}(P)=\ZZ q_{i}$ (since $q_{i}$ splits completely in $K$). Therefore, if $I_{i}(x)=P_{1}^{\beta_{1}}P_{2}^{\beta_{2}}\cdots P_{p-1}^{\beta_{p-1}}$,
where the $P_{j}$'s are the distinct prime ideals in $K$ lying above $q_{i}$, then
$$
    \ZZ q_{i}^{l_{i}}=\Nr_{K/\QQ}(I_{i}(x))=
    \prod_{j=1}^{p-1}\Nr_{K/\QQ}P_{j}^{\beta_{j}}=\ZZ q_{i}^{\beta_{1}+\ldots\beta_{p-1}}.
$$
Thus,
\begin{equation}
\beta_{1}+\ldots+\beta_{p-1}=l_{i}.
\end{equation}
It follows immediately that if $I_{i}(x)$ is of the form $I_{i}(x)=P^{l}$ (for some prime ideal $P$ in $K$ lying above $q_{i}$ and for some $l\in\ZZ-\{0\})$, then $p$ divides $\chi(I_{i}(x))$ if and only if $p$ divides $l$.
We have
%
%
%
\begin{corollary}\label{suffP}
Let $x\in L^{*}$, \ $\Nr_{L/\QQ}(x)=\pm q_{1}^{l_1}\cdots q_{n}^{l_n}$, \ $n\geq 1$, \ $l_i\in\ZZ-\{0\}$, \ $q_i$ a prime.
Suppose that there exists an $i$ such that $q_{i}$ splits completely in $L$ and $I_{i}(x)=P^{l_{i}}$,
where $P$ is a prime ideal of $\OO_{K}$, $p$ does not divide $l_{i}$.
Then $\Phi(\Nr_{L/K}(x)), \Phi(\zeta_{p}\Nr_{L/K}(x))\not\in {L^{*}}^p$. \eee
\end{corollary}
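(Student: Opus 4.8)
The plan is to deduce this corollary directly from the unlabeled corollary stated immediately after Theorem~\ref{criteria}, whose hypotheses are exactly conditions \textbf{(a)} and \textbf{(b)} of that theorem. Condition \textbf{(a)}, that $q_i$ splits completely in $L$, is assumed outright in the present statement. It therefore remains only to verify condition \textbf{(b)}, namely that $p\nmid\chi(I_i(x))$, from the stronger structural hypothesis $I_i(x)=P^{l_i}$, and then to invoke the preceding corollary.

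To that end, I would first note that since $q_i$ splits completely in $L$ it splits completely in $K$, so $\OO_K q_i=P_1\cdots P_{p-1}$ with the $P_j$ distinct, and one may write $I_i(x)=P_1^{\beta_1}\cdots P_{p-1}^{\beta_{p-1}}$. The hypothesis that $I_i(x)$ is a power of a single prime $P$ forces all but one exponent to vanish: say $P=P_m$, so that $\beta_m=l_i$ and $\beta_j=0$ for $j\neq m$. Substituting into the defining expression
$$
\chi(I_i(x))=e^{p-2}\beta_1+e^{p-3}\beta_{p-1}+\ldots+e\beta_3+\beta_2
$$
collapses the sum to a single surviving term $e^{k}l_i$, for some exponent $k\in\{0,1,\ldots,p-2\}$ depending only on $m$.

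The crux is then that $e$ is a primitive root modulo $p$, hence a unit in $\ZZ/p\ZZ$, so $e^{k}$ is coprime to $p$; consequently $p\mid\chi(I_i(x))$ if and only if $p\mid l_i$. This is precisely the observation recorded in the paragraph preceding the statement, derived there through the norm identity $\beta_1+\ldots+\beta_{p-1}=l_i$. Since by hypothesis $p\nmid l_i$, we conclude $p\nmid\chi(I_i(x))$, so condition \textbf{(b)} holds. With both \textbf{(a)} and \textbf{(b)} in force, the preceding corollary yields $\Phi(\Nr_{L/K}(x)),\ \Phi(\zeta_p\Nr_{L/K}(x))\not\in{L^*}^p$, as required. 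I do not expect any genuine obstacle here; the only point needing care is matching the index $m$ of the single surviving prime to the correct power of $e$ in the definition of $\chi$, and since every such power is a unit modulo $p$, its precise value is immaterial to the argument.
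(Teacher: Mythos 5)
Your proposal is correct and follows essentially the same route as the paper: the paragraph preceding the corollary makes exactly this observation, that when $I_i(x)=P^{l_i}$ the sum defining $\chi(I_i(x))$ collapses to a single term $e^{k}l_i$ with $e^{k}$ a unit modulo $p$, so condition \textbf{(b)} of Theorem~\ref{criteria} reduces to $p\nmid l_i$, and the conclusion then follows from the unlabeled corollary after that theorem. No gaps.
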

Let $\mathcal{H}(L)$ be the Hilbert class field over $L$ (see [6, Chapter 8]), and let $q$ be a rational prime which splits completely in $\mathcal{H}(L)$.
Then any prime ideal of $L$ lying above $q$ is principal in $L$.
%
%
\begin{corollary}\label{inf}
There are infinitely many elements $x\in L^{*}$ such that \\
1. \ $\Phi(\Nr_{L/K}(x))\not\in {L^{*}}^p$, and \\
2. \ $\Phi(\zeta_{p}\Nr_{L/K}(x))\not\in {L^{*}}^p$.
\end{corollary}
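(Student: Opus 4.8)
The plan is to produce infinitely many $x\in L^{*}$ satisfying the hypotheses of Corollary~\ref{suffP}, so that conclusions 1 and 2 follow simultaneously. The engine for the construction is the Hilbert class field remark preceding the statement: a rational prime $q$ that splits completely in $\mathcal{H}(L)$ splits completely in $L$ and, in addition, every prime ideal of $L$ above $q$ is principal. If we take a generator of such a prime ideal as our element $x$, then $I_{1}(x)$ will be a single prime of $\OO_{K}$ raised to the first power, which is precisely the shape demanded by Corollary~\ref{suffP}.

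First I would secure infinitely many admissible primes $q$. Since $L/\QQ$ is Galois (indeed cyclic of order $p(p-1)$, as noted in the introduction), its Hilbert class field $\mathcal{H}(L)$ is again Galois over $\QQ$, being the maximal unramified abelian extension of $L$ and hence stable under $\Gal(L/\QQ)$. By the Chebotarev density theorem the set of rational primes splitting completely in $\mathcal{H}(L)$ has positive density, so it is infinite. Fix such a prime $q$ (automatically unramified), and let $\mathcal{Q}$ be a prime ideal of $\OO_{L}$ lying above $q$. By the Hilbert class field remark, $\mathcal{Q}=\OO_{L}\pi$ for some $\pi\in\OO_{L}$.

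Next I would verify the ideal-theoretic data for $x=\pi$. Because $q$ splits completely in $L/\QQ$, the prime $\mathcal{Q}$ has residue degree $1$ over $\QQ$, so $\Nr_{L/\QQ}(\pi)=\pm q$; thus in the notation of Section~2 we have $n=1$, $q_{1}=q$, $l_{1}=1$, and $J_{1}(\pi)=\mathcal{Q}$. Writing $P=\mathcal{Q}\cap\OO_{K}$, the complete splitting of $q$ in $L$ forces the residue degree $f(\mathcal{Q}/P)$ to equal $1$, whence
$$
    I_{1}(\pi)=\Nr_{L/K}(\mathcal{Q})=P^{f(\mathcal{Q}/P)}=P.
$$
Hence $I_{1}(\pi)=P^{l_{1}}$ with $l_{1}=1$ and $p\nmid l_{1}$. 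All hypotheses of Corollary~\ref{suffP} are met, so $\Phi(\Nr_{L/K}(\pi))$ and $\Phi(\zeta_{p}\Nr_{L/K}(\pi))$ both lie outside ${L^{*}}^{p}$.

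Finally, distinct admissible primes $q$ yield elements $\pi$ with distinct rational prime norms $\pm q$, so these $\pi$ are pairwise distinct; as there are infinitely many such $q$, we obtain infinitely many suitable $x\in L^{*}$. I expect the only delicate points to be bookkeeping rather than substance: confirming that $\mathcal{H}(L)/\QQ$ is Galois so that Chebotarev applies directly (alternatively one may pass to the Galois closure of $\mathcal{H}(L)/\QQ$), and tracking the residue degrees carefully enough to land on $I_{1}(\pi)=P$ to the first power rather than a higher power. Once these are settled, the conclusion is immediate from Corollary~\ref{suffP}.
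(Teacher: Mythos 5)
Your proposal is correct and follows essentially the same route as the paper: apply Chebotarev to $\mathcal{H}(L)$ to get infinitely many completely split primes $q$, take a principal prime $\mathcal{Q}=\OO_{L}x$ above such a $q$, and invoke Corollary~\ref{suffP}. The extra details you supply (that $\mathcal{H}(L)/\QQ$ is Galois and that the residue degrees force $I_{1}(x)=P$ to the first power) are points the paper leaves implicit, but the argument is the same.
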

%
%
\begin{proof}
By Chebotarev's Density Theorem, there are infinitely many rational primes which split completely in $\mathcal{H}(L)$.
If $q$ is such a prime, let $Q$ be a prime ideal in $L$ lying above $q$. So $Q=\OO_L x$ for some $x\in \OO_{L}$. Clearly, $\Nr_{L/\QQ}(x)=q$.
The assertion then follows from Corollary \ref{suffP}.
\end{proof}
%
%
\subsection{Examples}
We shall illustrate the above results on $C_{p}$-extensions $F/\QQ$ of type $(S_1)$ [10, Chap. 2]. For that purpose we prove the following lemma.
%
%
\begin{lemma}
Let $r$ be a rational prime, $r\equiv 1\mod k$, $k\in\NN$.
Let $m=m(r)$ be a primitive root modulo $r$.
Consider the sum
\begin{equation}
    \delta_{k}(r)=
    \zeta_{r}+\zeta_{r}^{m^{k}}+\zeta_{r}^{m^{2k}}+\ldots+
    \zeta_{r}^{m^{\left(\frac{r-1}{k}-1\right)k}}.
\end{equation}
Then $\QQ(\delta_{k}(r))/\QQ$ is a $C_{k}$-extension.
\end{lemma}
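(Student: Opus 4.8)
The plan is to recognize $\delta_{k}(r)$ as a Gaussian period and to identify $\QQ(\delta_{k}(r))$ with the unique subfield of degree $k$ inside the cyclotomic field $\QQ(\zeta_{r})$. Since $r$ is prime, $\Gal(\QQ(\zeta_{r})/\QQ)\cong(\ZZ/r\ZZ)^{*}$ is cyclic of order $r-1$, via $a\mapsto\rho_{a}$ where $\rho_{a}(\zeta_{r})=\zeta_{r}^{a}$; in particular $\rho:=\rho_{m}$ is a generator. Because $k\mid r-1$, this cyclic group has a unique subgroup $H$ of index $k$, namely $H=\langle\rho^{k}\rangle$, of order $(r-1)/k$. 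Let $E=\QQ(\zeta_{r})^{H}$ be its fixed field. By the Galois correspondence $E/\QQ$ is cyclic of degree $k$, since $\Gal(E/\QQ)$ is a quotient of the cyclic group $\Gal(\QQ(\zeta_{r})/\QQ)$. It therefore suffices to prove that $\QQ(\delta_{k}(r))=E$.

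First I would observe that the exponents $m^{jk}$, $j=0,\ldots,\tfrac{r-1}{k}-1$, appearing in $\delta_{k}(r)$ are exactly the elements of $H$ viewed in $(\ZZ/r\ZZ)^{*}$, so that
$$
\delta_{k}(r)=\sum_{h\in H}\rho_{h}(\zeta_{r})=\Tr_{\QQ(\zeta_{r})/E}(\zeta_{r}).
$$
In particular $\delta_{k}(r)$ is fixed by $H$, which gives the inclusion $\QQ(\delta_{k}(r))\subseteq E$. To obtain equality it is enough to show that the stabilizer of $\delta_{k}(r)$ in $G:=\Gal(\QQ(\zeta_{r})/\QQ)$ is exactly $H$; as the stabilizer already contains $H$ and $[G:H]=k$, this is equivalent to showing that the Galois orbit of $\delta_{k}(r)$ has exactly $k$ elements.

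Since $\delta_{k}(r)$ is $H$-fixed, its conjugate $g(\delta_{k}(r))$ depends only on the coset $gH$, so the orbit is contained in $\{\eta_{i}:=\rho^{i}(\delta_{k}(r)):i=0,\ldots,k-1\}$, where $\eta_{i}$ is the sum of the $\zeta_{r}^{a}$ over the coset $\rho^{i}H$. The crux of the argument, and the step I expect to be the main obstacle, is to show that these $k$ periods are pairwise distinct. I would reduce this to the linear independence of $\zeta_{r},\zeta_{r}^{2},\ldots,\zeta_{r}^{r-1}$ over $\QQ$: these $r-1$ elements form a $\QQ$-basis of $\QQ(\zeta_{r})$, as one checks directly from $\zeta_{r}^{r-1}=-(1+\zeta_{r}+\cdots+\zeta_{r}^{r-2})$ together with the standard basis $1,\zeta_{r},\ldots,\zeta_{r}^{r-2}$. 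Each $\eta_{i}$ is the sum of the basis elements indexed by a distinct coset $\rho^{i}H$, and distinct cosets are disjoint as subsets of $(\ZZ/r\ZZ)^{*}$; hence for $i\neq i'$ the difference $\eta_{i}-\eta_{i'}$ is a nonzero $\ZZ$-combination of basis vectors and cannot vanish. Consequently the orbit has exactly $k$ elements, so by orbit--stabilizer the stabilizer of $\delta_{k}(r)$ has order $(r-1)/k=|H|$ and therefore equals $H$. Thus $\QQ(\delta_{k}(r))=E$, and $\QQ(\delta_{k}(r))/\QQ$ is a $C_{k}$-extension, as required.
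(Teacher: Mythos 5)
Your proof is correct and follows essentially the same route as the paper: both identify $\delta_{k}(r)$ as the trace of $\zeta_{r}$ to the fixed field of $\langle\sigma^{k}\rangle$ and then rule out a larger stabilizer by observing that distinct Gaussian periods involve disjoint sets of exponents, so their equality would contradict the linear independence of the $r-1$ nontrivial powers of $\zeta_{r}$ over $\QQ$. The only difference is packaging — you count the orbit via orbit--stabilizer, while the paper assumes a proper containment $\QQ(\delta_{k}(r))=\QQ(\zeta_{r})^{\sigma^{d}}$ for a proper divisor $d$ of $k$ and derives the same contradiction — so no substantive comparison is needed.
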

%
%
\begin{proof}
$\Gal(\QQ(\zeta_{r})/\QQ)$ is cyclic of order $r-1$, generated by the automorphism $\sigma:\zeta_{r}\mapsto\zeta_{r}^{m}$.
By the Fundamental Theorem of Galois Theory, it is enough to prove $\QQ(\delta_{k}(r))=\QQ(\zeta_{r})^{\sigma^{k}}$.
The inclusion $\QQ(\delta_{k}(r))\subseteq\QQ(\zeta_{r})^{\sigma^{k}}$ is because $\sigma^{k}$ moves cyclically the summands of (3) (an alternative argument: $\delta_{k}(r)$ is the image of $\zeta_{r}$ under the trace map $\Tr_{\QQ(\zeta_{r})/\QQ(\zeta_{r})^{\sigma^k}}$).
Suppose that $\QQ(\delta_{k}(r))\subsetneqq\QQ(\zeta_{r})^{\sigma^{k}}$.
There exist a proper divisor $d$ of $k$ such that $\QQ(\delta_{k}(r))=\QQ(\zeta_{r})^{\sigma^{d}}$.
In particular, $\sigma^{d}(\delta_{k}(r))=\delta_{k}(r)$, or
\begin{equation}
    \sum_{j=0}^{\frac{r-1}{k}-1}\zeta_{r}^{m^{jk+d}}-
    \sum_{j=0}^{\frac{r-1}{k}-1}\zeta_{r}^{m^{jk}}=0.
\end{equation}
The summands in (4) are distinct.
For if $\zeta_{r}^{m^{jk+d}}=\zeta_{r}^{m^{ik}}$ for some $i,j=0,1,\ldots,\frac{r-1}{k}-1$, $j\geq i$, then $m^{(j-i)k+d}\equiv 1(\mod r)$.
$m$ is primitive modulo $r$ so $r-1$ divides $(j-i)k+d$.
But, $(j-i)k+d<(\frac{r-1}{k}-1)k+k=r-1$, a contradiction.
To this end, there are $2(r-1)/k$ ($\leq r-1$) summands, and dividing each of them by $\zeta_{r}$ gives us a linear dependence among $1,\zeta_{r},\zeta_{r}^2,\ldots,\zeta_{r}^{r-2}$, a contradiction. $\QQ(\delta_{k}(r))=\QQ(\zeta_{r})^{\sigma^{k}}$, as required.
\end{proof}
%
%
\begin{example}
\emph{
$p=3$, $r=7$, $m(r)=3$. \
$\delta_{3}(7)=\zeta_{7}+\zeta_{7}^{-1}$. \
$F/\QQ=\QQ(\delta_{3}(7))/\QQ$.
Thus, $L/K=\QQ(\zeta_{3},\delta_{3}(7))/\QQ(\zeta_{3})$ is a
$C_{3}$-extension, generated by $\bar\sigma:\zeta_{7}\mapsto\zeta_{7}^{2}$, $\zeta_{3}\mapsto\zeta_{3}$.
Let $x=\delta_{3}(7)+\zeta_{3}$. \
$\gamma=\Nr_{L/K}(x)=3-\zeta_{3}$, \
$\Nr_{L/\QQ}(x)=\Nr_{K/\QQ}(\gamma)=13$ \ ($\tau:\zeta_{3}\mapsto\zeta_{3}^{-1}$).
$13$ splits
completely in $L$. By Corollary \ref{suffP}, $\Phi(\gamma)$, $\zeta_{3}\Phi(\gamma)\not\in {L^{*}}^{3}$.
}
\end{example}
%
%
\begin{example}
\emph{
$p=3$, $r=19$, $m(r)=2$.
$$
\delta_{3}(19)=\zeta_{19}+\zeta_{19}^{-1}+\zeta_{19}^{7}+\zeta_{19}^{-7}+\zeta_{19}^{8}+\zeta_{19}^{-8}.
$$
$F/\QQ=\QQ(\delta_{3}(19))/\QQ$. Thus,
$L/K=\QQ(\zeta_{3},\delta_{3}(19))/\QQ(\zeta_{3})$ is a
$C_{3}$-extension, generated by
$\bar\sigma:\zeta_{19}\mapsto\zeta_{19}^{6}$,
$\zeta_{3}\mapsto\zeta_{3}$. Let $x=\delta_{3}(19)+\zeta_{3}+1$. \
$\gamma=\Nr_{L/K}(x)=-7\zeta_3$, \
$\Nr_{L/\QQ}(x)=\Nr_{K/\QQ}(\gamma)=7^{2}$ and \ $7$ splits completely
in $L$. However, $\Phi(\gamma)$ generates a third power ideal in
$\OO_K$ although $\Phi(\gamma)$ and $\zeta_{3}\Phi(\gamma)$ are not third power elements in $L^{*}$. Therefore, Theorem \ref{criteria}, which is an \emph{ideal-theoretic} criterion, is failed while testing this $x$.
On the other hand, it tells us something
about the prime factorization of $\OO_{L}x$ in $L$: Since $x$ satisfies condition (a) of this theorem and does not satisfy condition (b), we have
$$
\chi(I_{1}(x))=2\beta_{1}+\beta_{2}\equiv 0\mod 3.
$$
Also, by equation (2) we have $\beta_{1}+\beta_{2}=2$.
Hence, $\beta_{1}=\beta_{2}=1$, thus $I=\OO_{K}7$.
$\OO_{L}x$ is therefore a product of two distinct primes of $O_{L}$, each of them has norm $7$ over $\QQ$.
}
\end{example}
%
%
\begin{example}
\emph{
$p=3$, $r=73$, $m(r)=5$. (Remark: $\OO_{K}=\ZZ[\zeta_{73}]$ is not a unique factorization domain).
$F/\QQ=\QQ(\delta_{3}(73))/\QQ$.
Thus, $L/K=\QQ(\zeta_{3},\delta_{3}(73))/\QQ(\zeta_{3})$ is a
$C_{3}$-extension, generated by $\bar\sigma:\zeta_{73}\mapsto\zeta_{73}^{24}$,  $\zeta_{3}\mapsto\zeta_{3}$.
Let $x=\delta_{3}(73)-\zeta_{3}+1$. \
$\gamma=\Nr_{L/K}(x)=21\zeta_{3}$, \
$\Nr_{L/\QQ}(x)=\Nr_{K/\QQ}(\gamma)=3^{2}7^{2}$. \
Thus, $\Phi(\gamma)=21^{3}\zeta_{3}\not\in {L^{*}}^{3}$.
}
\end{example}
%
%
\begin{example}
\emph{
$p=5$, $r=11$, $m(r)=2$. \
$\delta_{5}(11)=\zeta_{11}+\zeta_{11}^{-1}$. \
$F/\QQ=\QQ(\delta_{5}(11))/\QQ$.
Thus, $L/K=\QQ(\zeta_{5},\delta_{5}(11))/\QQ(\zeta_{5})$ is a
$C_{5}$-extension, generated by $\bar\sigma:\zeta_{11}\mapsto\zeta_{11}^{2}$, $\zeta_{5}\mapsto\zeta_{5}$.
Let $x=\delta_{5}(11)-\zeta_{5}$. \
$\gamma=\Nr_{L/K}(x)$,
$\Nr_{L/\QQ}(x)=\Nr_{K/\QQ}(\gamma)=991$ ($991$ is a rational prime).
$991$ splits
completely in $L$, hence $\Phi(\gamma)$ does not generate a fifth power ideal of $\OO_L$.
}
\end{example}
%
%
\section{Explicit Constructions of the non-Abelian $p^3$-Extensions}
\subsection{}
Let $N/k$ be a Galois extension with Galois group $G=\Gal(N/k)$.
Let $\pi:E\twoheadrightarrow G$ be an epimorphism of a given finite group $E$ onto $G$.
\emph{The Galois theoretical embedding problem} is to find a pair $(T/k,\varphi)$ consisting of a Galois extension $T/k$ which contains $N/k$, and an isomorphism $\varphi:\Gal(T/k)\to E$ such that $\pi\circ\varphi=\Res_{T/N}$, where $\Res_{T/N}:\Gal(T/k)\twoheadrightarrow G$ is the restriction map.
We say that the embedding problem is \emph{given by $N/k$ and $\pi:E\twoheadrightarrow G$}.
If such a pair $(T/k,\varphi)$ does exist, we say that the embedding problem is \emph{solvable}.
$T$ is called a \emph{solution field}, and $\varphi$ is called a \emph{solution isomorphism}. $A:=\ker\pi$ is called the \emph{kernel of the embedding problem}.
If $A$ is cyclic of order $m$, the characteristic of $k$ does not divide $m$, and $N^{*}$ contains $\mu_{m}$ (the multiplicative group consisting of the $m^{\Th}$ roots of unity), then the embedding problem is of \emph{Brauer type} (see [4]).
\\
\indent
If $\varphi$ is required only to be a monomorphism, we say that the embedding problem is \emph{weakly solvable}.
In that case, the pair $(T/k,\varphi)$ is called an \emph{improper solution} (or a \emph{weak solution}).
An improper solution can also be interpret in terms of Galois algebra ([1], [3] Chapter 4).
The monograph [2] treats the embedding problem from this point of view.
In the cases discussed in this paper, any improper solution will automatically be a solution;
the kernel $A$ will be contained in the Frattini subgroup of $E$ (see [2], Chapter.6, Corollary 5).
\\
\indent
The constructive approach to embedding problems is to describe explicitly the solution field $T$ (in case it exists).
We shall consider a series of embedding problems, in each $T/N$ will be a $C_{p}$-extension, and the goal is to exhibit an explicit primitive element of $T$ over $N$ (See also [7] for further details).
In practice, given a set of generators $\gamma_{1},\ldots,\gamma_{d}$ for $G$, we must extend them to $k$-automorphisms $\bar{\gamma_{1}},\ldots,\bar{\gamma_{d}}$ of a larger field $T$, with common fixed field $k$.
$T/k$ is then a Galois extension generated by $\bar{\gamma_{1}},\ldots,\bar{\gamma_{d}},\bar{\gamma}_{d+1},\ldots,\bar{\gamma_{c}}$, where $\bar{\gamma}_{d+1},\ldots,\bar{\gamma_{c}}$ generate $\Gal(T/N)$.
The extensions should be constructed in such a way that the $\bar{\gamma_{i}}$'s behave as a given set of $c$ generators for $E$.
$T$ is then a solution field, and the isomorphism which takes each $\bar{\gamma_{i}}$ to the corresponding generator of $E$ is a solution isomorphism.
\subsection{}
Let $N$ be a field of characteristic different from $p$.
$\mu_{p}\subseteq N^{*}$, and let $a\in N^{*}\setminus{N^{*}}^{p}$.
Then $N(\sqrt[p]{a})/N$ is a $C_{p}$-extension.
A generator for $\Gal(N(\sqrt[p]{a})/N)$ is given by $\sqrt[p]{a}\mapsto\zeta_{p}\sqrt[p]{a}$.
The only elements $\theta\in N(\sqrt[p]{a})$ with the property: $\theta^{p}\in N$, are those of the form $z(\sqrt[p]{a})^{j}$ for some $z\in N$, $j=0,1,\ldots,p-1$.
\\
\indent
Suppose further that $N/k$ is a Galois extension, then $N(\sqrt[p]{a})/k$ is a Galois extension if and only if, for each $\gamma\in\Gal(N/k)$, there exist $i_{\gamma}\in\ZZ\setminus p\ZZ$ \ such that \ $\gamma{a}/a^{i_{\gamma}}\in {N^{*}}^{p}$.
\subsection{}
Let $E$ be a non-abelian group of order $p^{3}$. Up to isomorphism, $E$ is necessarily one of the following two groups:
\newline
\textbf{1.} \ The Heisenberg group:
$$
    \HH_{p^{3}}=\langle \ u,v,w \ : \ u^{p}=v^{p}=w^{p}=1, \ wu=uw, \ wv=vw, \ vu=uvw  \ \rangle.
$$
In other words, $\HH_{p^{3}}$ is generated by two elements $u$, $v$ of order $p$, such that their commutator $w$ is central.
It can be realized as the subgroup of $\GL_{3}(\ZZ/\ZZ p)$ consisting of upper triangular matrices with 1's in the diagonal.
\\
\textbf{2.} \ The semidirect product:
$$
    C_{p^{2}}\rtimes C_{p}=\langle \ u,v \ : \ u^{p^{2}}=v^{p}=1, \ vu=u^{p+1}v \ \rangle.
$$
For the classification of the non-abelian groups of order $p^3$, see [9, page 67].
\subsection{}
The aim of the following Theorem 4.1 (resp. Theorem 4.2) is to describe \emph{how} $\HH_{p^3}$-extensions (resp. $C_{p^{2}}\rtimes C_{p}$-extensions) are constructed over $\QQ$, starting with elements $x\in L^{*}$ with
$\Phi(\Nr_{L/K}(x))\not\in {L^{*}}^{p}$ (resp. $\Phi(\zeta_{p}\Nr_{L/K}(x))\not\in {L^{*}}^{p}$).
These constructions are essentially known - they can be obtained as private cases of [4, Theorem 2.4.1 and Corollary 8.1.5].
We give a direct and self contained proof, consistent with our notation and with subsection 4.1.
\begin{theorem}[The case $E=\HH_{p^{3}}$] \
\\
Let $x\in L^{*}$ such that $b=b(x)=\Phi(\Nr_{L/K}(x))\not\in {L^{*}}^{p}$.
Then $G=\Gal(L(\sqrt[p]{b})/K)\cong C_p\times C_p$, generated by $\bar{\bar\sigma}$ and $\eta$, where, $\bar{\bar\sigma}: \sqrt[p]{b}\mapsto\sqrt[p]{b}$, \
$\eta:\sqrt[p]{b}\mapsto\zeta_{p}\sqrt[p]{b}$, \ $\bar{\bar\sigma}|_L=\bar\sigma$, \ \ $\eta|_{L}=\id_{L}$.
Moreover, the (Brauer type) embedding problem given by $L(\sqrt[p]{b})/K$ and $\pi:E\twoheadrightarrow G$ is solvable, where $\pi: u\mapsto\bar{\bar\sigma}$, $v\mapsto\eta$; \
a solution field is $M=L(\sqrt[p]{\omega},\sqrt[p]{b})$, \
$\omega=\Phi(\beta)$, \ $\beta=x^{p-1}\bar\sigma x^{p-2}\ldots\bar\sigma^{p-2}x$.
\\ \indent
Since $\omega\in\Phi(L^{*})$, $M/\QQ$ is a Galois extension with
$\Gal(M^{\kappa}/\QQ)\cong E$, where $\kappa$ is the extension of $\bar{\tau}$ to $M$, given by
$\kappa:\sqrt[p]{\omega}\mapsto\beta^{(1-e^{p-1})/p}\left({\sqrt[p]{\omega}}\right)^{e}$, and
$M^{\kappa}=F(\alpha)$, \ $\alpha=\sqrt[p]{\omega}+\kappa\sqrt[p]{\omega}+\ldots+\kappa^{p-2}\sqrt[p]{\omega}$.
Finally, $F(\alpha)$ is the splitting field of the minimal polynomial for
$\alpha$ over $\QQ$.
\end{theorem}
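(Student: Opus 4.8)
The plan is to run the whole construction over $K$ first and descend to $\QQ$ only at the end, deriving everything from two algebraic identities for $\Phi$. The first is that $\Phi$ commutes with $\bar\sigma$ (because $\Gal(L/\QQ)$ is abelian), so that $b=\Phi(\Nr_{L/K}x)=\prod_{j=0}^{p-1}\bar\sigma^{j}\Phi(x)=\Nr_{L/K}(\Phi(x))$; this norm expression is exactly what will force $u$ to have order $p$. The second is the twisting relation $\bar\tau\Phi(y)=\Phi(y)^{e}\,y^{1-e^{p-1}}$, valid for every $y\in L^{*}$ and obtained by re-indexing the defining product and using $\bar\tau^{p-1}=\id$ together with $p\mid(1-e^{p-1})$; it shows $\bar\tau\Phi(y)\equiv\Phi(y)^{e}\pmod{(L^{*})^{p}}$ and is responsible for the descent. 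For Part~1 I would note that $b\in K^{*}$ and, since $(K^{*})^{p}\subseteq(L^{*})^{p}$, that $b\notin(L^{*})^{p}$ forces $b\notin(K^{*})^{p}$; hence $K(\sqrt[p]{b})/K$ and $L/K$ are degree-$p$ extensions whose compositum $L(\sqrt[p]{b})$ has degree $p^{2}$ over $K$, so $\Gal(L(\sqrt[p]{b})/K)$ is a subgroup of $\Gal(L/K)\times\Gal(K(\sqrt[p]{b})/K)\cong C_{p}\times C_{p}$ of full order, generated by the prescribed $\bar{\bar\sigma}$ and $\eta$.

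For the embedding problem I would first record the $\bar\sigma$-transformation law of $\beta$. Re-indexing $\beta=\prod_{j=0}^{p-2}(\bar\sigma^{j}x)^{p-1-j}$ gives $\bar\sigma\beta=\Nr_{L/K}(x)\,\beta\,x^{-p}$, and applying $\Phi$ yields the key relation $\bar\sigma\omega=b\,\omega\,\Phi(x)^{-p}$. Via the criterion of 4.2 this shows $M/K$ is Galois and lets me extend $\bar{\bar\sigma}$ to $u$ on $M$ by $u|_{L}=\bar\sigma$, $u(\sqrt[p]{b})=\sqrt[p]{b}$, $u(\sqrt[p]{\omega})=\sqrt[p]{b}\,\sqrt[p]{\omega}\,\Phi(x)^{-1}$, the last formula being legitimate because $u(\sqrt[p]{\omega})^{p}=b\,\omega\,\Phi(x)^{-p}=\bar\sigma\omega$. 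Setting $v=\eta$ and $w=vuv^{-1}u^{-1}$, a short computation on the two radicals shows $w$ fixes $L$ and $\sqrt[p]{b}$ and sends $\sqrt[p]{\omega}\mapsto\zeta_{p}\sqrt[p]{\omega}$, so $w$ is central of order $p$ and $vu=uvw$; the norm identity $\prod_{i=0}^{p-1}\bar\sigma^{i}\Phi(x)=b$ then gives $u^{p}=\id$, while $v^{p}=\id$ is immediate. To see $[M:K]=p^{3}$ I would prove $\omega\notin\langle b\rangle(L^{*})^{p}$: if $\omega=z^{p}b^{j}$, substituting into $\bar\sigma\omega=b\,\omega\,\Phi(x)^{-p}$ collapses to $b=(\bar\sigma z\cdot\Phi(x)/z)^{p}\in(L^{*})^{p}$, contradicting the hypothesis. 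Thus $b,\omega$ are independent in $L^{*}/(L^{*})^{p}$, the generators match the presentation of $\HH_{p^{3}}$, and $\langle u,v,w\rangle=\Gal(M/K)\cong\HH_{p^{3}}$ with $\pi\circ\varphi=\Res_{M/L(\sqrt[p]{b})}$.

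For the descent I would again use 4.2 to extend $\bar\tau$: the twisting relation gives $\bar\tau b\equiv b^{e}$ and (this is where $\omega\in\Phi(L^{*})$ enters) $\bar\tau\omega\equiv\omega^{e}\pmod{(L^{*})^{p}}$, so $M/\QQ$ is Galois and I may take the lift $\kappa$ with $\kappa(\sqrt[p]{\omega})=\beta^{(1-e^{p-1})/p}(\sqrt[p]{\omega})^{e}$ and the analogous formula on $\sqrt[p]{b}$. I would then check that $\kappa$ commutes with $u,v,w$; the only nonroutine verification is on $u(\sqrt[p]{\omega})$, where $\kappa u=u\kappa$ reduces to the identity $\bar\sigma(c)\,\bar\tau\Phi(x)=c\,c'\,\Phi(x)^{e}$ with $c=\beta^{(1-e^{p-1})/p}$ and $c'=\Nr_{L/K}(x)^{(1-e^{p-1})/p}$, which is precisely the product of the two transformation laws $\bar\sigma\beta=\Nr_{L/K}(x)\,\beta\,x^{-p}$ and $\bar\tau\Phi(x)=\Phi(x)^{e}x^{1-e^{p-1}}$. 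Consequently $\kappa$ is central of order $p-1$, $\Gal(M/\QQ)\cong\HH_{p^{3}}\times\langle\kappa\rangle$, the subgroup $\langle\kappa\rangle$ is normal, and restriction gives $\Gal(M^{\kappa}/\QQ)\cong\Gal(M/K)\cong E$. Since $\alpha$ is the sum over the full $\kappa$-orbit of $\sqrt[p]{\omega}$ it is $\kappa$-invariant, so $F(\alpha)\subseteq M^{\kappa}$, and I would identify $M^{\kappa}=F(\alpha)$ by computing the $\Gal(M^{\kappa}/\QQ)$-orbit of $\alpha$, after which $M^{\kappa}/\QQ$ being Galois makes $F(\alpha)$ the splitting field of $\Irr(\alpha,\QQ)$.

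The step I expect to be the main obstacle is this last one: verifying that $\alpha=\sum_{i=0}^{p-2}\kappa^{i}\sqrt[p]{\omega}$ is a primitive element, i.e. that no nontrivial element of $\Gal(M^{\kappa}/\QQ)$ fixes it. Because each $\kappa^{i}\sqrt[p]{\omega}$ is merely scaled by the two generators fixing $F$, the stabilizer computation is delicate, and pinning down the exact fixed field of $\kappa$ (equivalently, the precise action of the chosen lift on $\sqrt[p]{b}$) is the crux on which both the equality $M^{\kappa}=F(\alpha)$ and the degree of $\Irr(\alpha,\QQ)$ hinge.
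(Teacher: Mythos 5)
Your proposal follows the paper's proof almost step for step: the same Kummer-theoretic identification of $\Gal(L(\sqrt[p]{b})/K)$, the same transformation laws $\bar\sigma\omega=b\,\omega\,\Phi(x)^{-p}$ and $\bar\tau\Phi(y)=\Phi(y)^{e}y^{1-e^{p-1}}$, the same lifts of $\bar{\bar\sigma}$, $\eta$, $\bar\tau$, the same argument that $\omega\notin\left(L(\sqrt[p]{b})^{*}\right)^{p}$, and the same conclusion $\Gal(M/\QQ)\cong\HH_{p^{3}}\times\langle\kappa\rangle$. Where you differ you are in fact more careful than the paper: the commutation of $\kappa$ with $u$, which the paper dismisses as ``easy to verify,'' is exactly the identity you isolate, and it checks out.

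The step you flag as the main obstacle is, however, a genuine gap, and your instinct about where the difficulty sits is exactly right. Each summand $\kappa^{i}\sqrt[p]{\omega}$ lies in $L\cdot(\sqrt[p]{\omega})^{e^{i}}$, so $\alpha\in L(\sqrt[p]{\omega})$; but $\Gal(M/L(\sqrt[p]{\omega}))$ is the order-$p$ subgroup generated by $vw^{-c}$ (where $v(\sqrt[p]{\omega})=\zeta_{p}^{c}\sqrt[p]{\omega}$ is whatever lift of $\eta$ you chose), and this subgroup fixes $F$ while not lying in $\langle\kappa\rangle$. Hence $\Gal(M/F(\alpha))\supseteq\langle\kappa,vw^{-c}\rangle$, so $[F(\alpha):\QQ]\le p^{2}$ and the asserted equality $M^{\kappa}=F(\alpha)$ cannot hold as stated. (The paper's own proof has the same lacuna: it reduces $F(\alpha)=M^{\kappa}$ to ``$\tilde\eta\alpha\ne\alpha$ and $\tilde\lambda\alpha\ne\alpha$,'' which rules out two of the $p+1$ order-$p$ subgroups of $\Gal(M^{\kappa}/F)\cong C_{p}\times C_{p}$ but not the diagonal one that visibly stabilizes $\alpha$.) What is true, and what the polynomial computations in Section 5 actually require, is the final assertion with $F(\alpha)$ replaced by $M^{\kappa}$: the core in $\Gal(M/\QQ)$ of the stabilizer of $\alpha$ equals $\langle\kappa\rangle$, since any larger normal subgroup would contain the center $\langle w\rangle$ of the $\HH_{p^{3}}$ factor, and $w\alpha\ne\alpha$ by the linear independence of the powers $(\sqrt[p]{\omega})^{e^{i}}$ over $L$. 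Therefore the splitting field of $\Irr(\alpha;\QQ)$ is $M^{\kappa}$ and is an $\HH_{p^{3}}$-extension of $\QQ$, even though $F(\alpha)$ itself is a proper subfield of degree $p^{2}$. If you carry out your proposed stabilizer computation, that is the statement to aim for.
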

\begin{equation*}
    \xymatrix{
          &                       & M=L(\sqrt[p]{\omega},\sqrt[p]{b}) \ar@{-}[d]^{p} \ar@{-}[dl]_{p-1} \ar@{.}[rr] & & \cdot \ar@{<.>}[ddd]^{E} \\
          \cdot \ar@{<.>}[ddd]_{E} \ar@{.}[r] & M^{\kappa}=F(\alpha) \ar@{-}[dd]^{p^2} & L(\sqrt[p]{b}) \ar@{-}[d]^{p} & & \\
                               &  & L \ar@{-}[dl]_{p-1} \ar@{-}[dr]^{p} & & \\
          & F \ar@{-}[dr]^{p}          &  & K \ar@{-}[dl]_{p-1} \ar@{.}[r]      & \cdot \\
          \cdot \ar@{.}[rr] & & \QQ & & &
   }
\end{equation*}
\begin{proof}
Since $\bar{\sigma}b=b$, \ $L(\sqrt[p]{b})/K$ is a Galois extension.
We extend $\bar{\sigma}$ to
$\Gal(L(\sqrt[p]{b})/K)$ by $\bar{\bar{\sigma}}(\sqrt[p]{b})=\sqrt[p]{b}$.
$\eta$ generates $\Gal(L(\sqrt[p]{b})/L)$, and $\bar{\sigma}$ generates $\Gal(L/K)$.
It follows that $\Gal(L(\sqrt[p]{b})/K)$ is generated by $\bar{\bar{\sigma}}$ and $\eta$.
Clearly,
$\bar{\bar{\sigma}}\eta=\eta\bar{\bar{\sigma}}$ \ and \ $\bar{\bar{\sigma}}^{p}=\eta^{p}=\id_{L(\sqrt[p]{b})}$, thus,
$L(\sqrt[p]{b})/K$ is a $C_{p}\times C_{p}$-extension [$L(\sqrt[p]{b})$ is a Kummer extension of $K$ of exponent $p$].
\\
\indent
$\eta(\omega)=\omega$ \ and,
\begin{equation}
\frac{\bar{\bar{\sigma}}(\omega)}{\omega}=\frac{\bar{\sigma}(\Phi(\beta))}{\Phi(\beta)}=
\frac{\Phi(\Nr_{L/K}(x))}{\Phi(x)^{p}}=
\left(\frac{\sqrt[p]{b}}{\Phi(x)}\right)^{p}\in{L(\sqrt[p]{b})^{*}}^{p},
\end{equation}
thus, $M/K$ is a Galois extension.
We extend $\bar{\bar{\sigma}}$ and $\eta$ to
$\tilde{\sigma},\tilde{\eta}\in\Gal(M/K)$ by
\begin{equation}
\tilde{\sigma}(\sqrt[p]{\omega})=\frac{\sqrt[p]{b}}{\Phi(x)}\sqrt[p]{\omega}, \ \ \ \ \ \tilde{\eta}(\sqrt[p]{\omega})=\zeta_{p}\sqrt[p]{\omega},
\end{equation}
respectively.
Note that $M/L(\sqrt[p]{b})$ is a $C_{p}$-extension.
For if $\omega\in {L(\sqrt[p]{b})^{*}}^{p}$ then $\omega=z^{p}b^{j}$ for some $z\in L$, $j=0,1,\ldots p-1$.
From $(5)$ it follows that $b=(\Phi(x)(\bar{\sigma}(z)/z))^{p}\in{L^{*}}^{p}$ - a contradiction.
\\
\indent
$\Gal(M/K)$ is then generated by $\tilde{\sigma},\tilde{\eta}$ and $\tilde{\lambda}$,
where $\tilde{\lambda}(\sqrt[p]{\omega})=\zeta_{p}\sqrt[p]{\omega}$ - a generator for
$\Gal(M/L(\sqrt[p]{b}))$.
Clearly, $\tilde{\eta}$ and $\tilde{\lambda}$ is of order $p$, and since
\small
$$
    \tilde{\sigma}^{p}(\sqrt[p]{\omega})=
    \frac{b}{\Phi(x)\bar{\sigma}(\Phi(x))\cdots\bar{\sigma}^{p-1}(\Phi(x))}\sqrt[p]{\omega}=
    \frac{b}{\Nr_{L/K}(\Phi(x))}\sqrt[p]{\omega}=\frac{b}{\Phi(\Nr_{L/K}(x))}\sqrt[p]{\omega}=\sqrt[p]{\omega},
$$
\normalsize
$\tilde{\sigma}$ is of order $p$.
$\tilde{\lambda}$ commutes with both $\tilde{\sigma}$ and $\tilde{\eta}$, and the
relation $\tilde{\eta}\tilde{\sigma}=\tilde{\sigma}\tilde{\eta}\tilde{\lambda}$ holds.
Therefore, the group isomorphism $\varphi:\Gal(M/K)\cong\HH_{p^3}$,
sending $\tilde{\sigma}\mapsto u$, $\tilde{\eta}\mapsto v$, $\tilde{\lambda}\mapsto w$, is a solution isomorphism
for the given embedding problem.
\\
\indent
Now,
$$
\frac{\bar{\tau}\Phi(x)}{\Phi(x)^{e}}=\Phi\left(\frac{\bar{\tau}x}{x^e}\right)=
\frac{\bar{\tau}x^{e^{p-2}}}{x^{e^{p-1}}}\cdot
\frac{\bar{\tau}^{2}x^{e^{p-3}}}{\bar{\tau}x^{e^{p-2}}}\cdots
\frac{\bar{\tau}^{p-2}x^{e}}{\bar{\tau}^{p-3}x^{e^2}}\cdot
\frac{x}{\bar{\tau}^{p-2}x^{e}}=x^{1-e^{p-1}},
$$
hence,
$$
\frac{\bar{\tau}\omega}{\omega^e}=
\frac{\bar{\tau}\Phi\left(x^{p-1}\bar{\sigma}x^{p-2}\cdots\bar{\sigma}^{p-2}x\right)}
{\Phi\left(x^{p-1}\bar{\sigma}x^{p-2}\cdots\bar{\sigma}^{p-2}x\right)^e}=
\left(\frac{\bar{\tau}\Phi(x)}{\Phi(x)^e}\right)^{p-1}
\bar{\sigma}\left(\frac{\bar{\tau}\Phi(x)}{\Phi(x)^e}\right)^{p-2}\cdots \
\bar{\sigma}^{p-2}\left(\frac{\bar{\tau}\Phi(x)}{\Phi(x)^e}\right)
$$
$$
=(x^{p-1}\bar{\sigma}x^{p-2}\cdots\bar{\sigma}^{p-2}x)^{1-e^{p-1}}=
\left(\beta^{\frac{1-e^{p-1}}{p}}\right)^{p}\in {L^{*}}^{p},
$$
and
$$
    \frac{\bar{\tau}b}{b^{e}}=\frac{\bar{\tau}\Phi(\Nr_{L/K}(x))}{\Phi(\Nr_{L/K}(x))^{e}}=
    \Nr_{L/K}\left(\frac{\bar{\tau}\Phi(x)}{\Phi(x)^{e}}\right)=
    \left(\Nr_{L/K}(x)^{\frac{1-e^{p-1}}{p}}\right)^{p}\in {L^{*}}^{p}.
$$
Thus, $M/\QQ$ is a Galois extension.
We extend $\bar{\tau}$ to $\kappa\in\Gal(M/\QQ)$ by
$$
    \kappa(\sqrt[p]{\omega})=\beta^{(1-e^{p-1})/p}\left({\sqrt[p]{\omega}}\right)^{e}, \ \ \ \ \
    \kappa(\sqrt[p]{b})=\Nr_{L/K}(x)^{(1-e^{p-1})/p}(\sqrt[p]{b})^{e}.
$$
Since $\kappa$ is an extension of $\tau$ to $M$, $\Gal(M/\QQ)$ is generated by $\tilde{\sigma}$,$\tilde{\eta}$,$\tilde{\lambda}$, and $\kappa$.
It is easy to verify that $\kappa$ is central and has order $p-1$
[while doing it, we take $\tau:\zeta_{p}\mapsto\zeta_{p}^{e}$.
Also, note that $\kappa$ is the unique extension of $\bar{\tau}$ of order $p-1$].
It follows that $\Gal(M/\QQ)\cong\HH_{p^{3}}\times C_{p-1}$ and $M^{\kappa}/\QQ$ is Galois with
$\Gal(M^{\kappa}/\QQ)\cong\HH_{p^3}$, as required.
\\
\indent
Clearly, $F(\alpha)\subseteq M^{\kappa}$. \
We have the short exact sequence
$$
\xymatrix{
    1 \ar@{->}[r] & \Gal(M/F) \ar@{->}[r]^{\imath} &
    \Gal(M/\QQ) \ar@{->}[r]^{\Res} & \Gal(F/\QQ) \ar@{->}[r] & 1,
}
$$
It follows that $\Gal(M/F)$ is generated by $\tilde{\eta}$,$\tilde{\lambda}$, and $\kappa$.
Thus, $F(\alpha)=M^{\kappa}$ if and only if $\tilde{\eta}\alpha\neq\alpha$ and $\tilde{\lambda}\alpha\neq\alpha$.
But $\tilde{\eta}\alpha=\alpha$ or $\tilde{\lambda}\alpha=\alpha$ imply linear dependence among the elements
$\sqrt[p]{\omega},\sqrt[p]{\omega}^{e},\ldots,\sqrt[p]{\omega}^{e^{p-2}}$ over $L$, a contradiction.
\\
\indent
To this end, $\Gal(F(\alpha)/\QQ(\alpha))\cong\Gal(F/\QQ)\cong C_{p}$,
and $\QQ(\alpha)/\QQ$ is not a normal extension.
Hence, $F(\alpha)$ is the splitting field for $\Irr(\alpha;\QQ)$.
\end{proof}
\begin{theorem}[The case $E=C_{p^{2}}\rtimes C_{p}$] \
\\
Let $x\in L^{*}$ such that $b=b(x)=\Phi(\zeta_{p}\Nr_{L/K}(x))\not\in {L^{*}}^{p}$.
Then $G=\Gal(L(\sqrt[p]{b})/K)$ \\ $\cong C_p\times C_p$,
generated by $\bar{\bar\sigma}$ and $\eta$, where,
$\bar{\bar\sigma}: \sqrt[p]{b}\mapsto\sqrt[p]{b}$, \
$\eta:\sqrt[p]{b}\mapsto\zeta_{p}\sqrt[p]{b}$, \ $\bar{\bar\sigma}|_L=\bar\sigma$, \ \ $\eta|_{L}=\id_{L}$.
Moreover, the (Brauer type) embedding problem given by $L(\sqrt[p]{b})/K$ and
$\pi:E\twoheadrightarrow G$ is solvable, where $\pi: u\mapsto\bar{\bar\sigma}$, $v\mapsto\eta$; \
a solution field is $M=L(\sqrt[p]{\omega},\sqrt[p]{b})$, \
$\omega=\Phi(\beta\sqrt[p]{a})$, \ $\beta=x^{p-1}\bar\sigma x^{p-2}\ldots\bar\sigma^{p-2}x$, \ $L=K(\sqrt[p]{a})$.
\\ \indent
Since $\omega\in\Phi(L^{*})$, $M/\QQ$ is a Galois extension with
$\Gal(M^{\kappa}/\QQ)\cong E$, where $\kappa$ is the extension of $\bar{\tau}$ to $M$, given by
$\kappa:\sqrt[p]{\omega}\mapsto{(\beta\sqrt[p]{a})}^{(1-e^{p-1})/p}\left({\sqrt[p]{\omega}}\right)^{e}$, and
$M^{\kappa}=F(\alpha)$, \ $\alpha=\sqrt[p]{\omega}+\kappa\sqrt[p]{\omega}+\ldots+\kappa^{p-2}\sqrt[p]{\omega}$.
Finally, $F(\alpha)$ is the splitting field of the minimal polynomial for
$\alpha$ over $\QQ$.
\end{theorem}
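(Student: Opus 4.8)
The statement for $E=C_{p^2}\rtimes C_p$ is structurally parallel to the Heisenberg case just proved, so my plan is to mirror Theorem~4.1 step by step, tracking exactly where the extra twist by $\zeta_p$ in the definition $b=\Phi(\zeta_p\Nr_{L/K}(x))$ and the extra factor $\sqrt[p]{a}$ in $\omega=\Phi(\beta\sqrt[p]{a})$ change the group relations. First I would establish, word for word as before, that $L(\sqrt[p]{b})/K\cong C_p\times C_p$: since $\bar\sigma b=b$ (because $b$ is a value of $\Phi$ on a norm, hence $\bar\sigma$-fixed up to a $p$-th power, and in fact $\bar\sigma$-fixed as an element since $\zeta_p\in K$), the extension $L(\sqrt[p]{b})/K$ is Galois, and the two commuting order-$p$ automorphisms $\bar{\bar\sigma}$, $\eta$ generate a Kummer extension of exponent $p$. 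This part is essentially unchanged.

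**The core computation.** The heart of the proof is verifying that $M=L(\sqrt[p]{\omega},\sqrt[p]{b})$ is a solution field, which requires computing $\tilde\sigma^p(\sqrt[p]{\omega})$ and checking it equals $\sqrt[p]{\omega}^{\,p+1}$ rather than $\sqrt[p]{\omega}$ — this is precisely the relation $vu=u^{p+1}v$ that distinguishes $C_{p^2}\rtimes C_p$ from $\HH_{p^3}$. Concretely, I would define $\tilde\sigma,\tilde\eta\in\Gal(M/K)$ extending $\bar{\bar\sigma},\eta$ via $\tilde\sigma(\sqrt[p]{\omega})=(\sqrt[p]{b}/\Phi(x\sqrt[p]{a}))\sqrt[p]{\omega}$ and $\tilde\eta(\sqrt[p]{\omega})=\zeta_p\sqrt[p]{\omega}$, in analogy with equation~(6). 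The key point is that $\bar\sigma\sqrt[p]{a}=\zeta_p\sqrt[p]{a}$ (as $L=K(\sqrt[p]{a})$ and $\bar\sigma$ generates $\Gal(L/K)$), so computing $\tilde\sigma^p$ now produces an extra factor of $\Phi$ applied to $\Nr_{L/K}(\sqrt[p]{a})$-type terms, and the telescoping norm $\Nr_{L/K}(\Phi(x\sqrt[p]{a}))=\Phi(\Nr_{L/K}(x)\cdot\Nr_{L/K}(\sqrt[p]{a}))$ interacts with the $\zeta_p$ inside $b$. I would carry out this computation to show $\tilde\sigma^p(\sqrt[p]{\omega})=\zeta_p^{\,c}\sqrt[p]{\omega}$ for the appropriate unit making $\tilde\sigma$ have order $p^2$ on $M$, with $\tilde\lambda:=\tilde\sigma^p$ generating $\Gal(M/L(\sqrt[p]{b}))$, and then confirm the defining relation $\tilde\eta\tilde\sigma=\tilde\sigma^{p+1}\tilde\eta$ holds. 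The isomorphism $\varphi:\Gal(M/K)\cong C_{p^2}\rtimes C_p$ sending $\tilde\sigma\mapsto u$, $\tilde\eta\mapsto v$ is then the solution isomorphism.

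**Descent to $\QQ$ and identifying $F(\alpha)$.** For the second paragraph of the statement I would repeat the descent argument verbatim in structure: compute $\bar\tau\omega/\omega^e$ and $\bar\tau b/b^e$ and show both lie in ${L^*}^p$, so that $M/\QQ$ is Galois. Here the computation of $\bar\tau\Phi(x\sqrt[p]{a})/\Phi(x\sqrt[p]{a})^e$ replaces $\bar\tau\Phi(x)/\Phi(x)^e=x^{1-e^{p-1}}$ from the Heisenberg case, and I must track the contribution of $\bar\tau\sqrt[p]{a}/(\sqrt[p]{a})^e$; since $a\in K^*$ this is controlled by how $\bar\tau$ acts on $K$, giving an element whose $\Phi$-image is again a $p$-th power times the expected factor. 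I would then define $\kappa$ extending $\bar\tau$ by the stated formula, verify it is central of order $p-1$ (taking $\tau:\zeta_p\mapsto\zeta_p^e$), conclude $\Gal(M/\QQ)\cong(C_{p^2}\rtimes C_p)\times C_{p-1}$, and hence $\Gal(M^\kappa/\QQ)\cong E$. Finally, identifying $M^\kappa=F(\alpha)$ proceeds exactly as in Theorem~4.1 via the short exact sequence for $\Gal(M/F)$ and the linear-independence of $\sqrt[p]{\omega},\sqrt[p]{\omega}^{\,e},\dots,\sqrt[p]{\omega}^{\,e^{p-2}}$ over $L$, yielding that $F(\alpha)$ is the splitting field of $\Irr(\alpha;\QQ)$.

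**Main obstacle.** The delicate point I expect to consume the most care is the $\tilde\sigma^p$ computation: the $\zeta_p$ inserted in $b=\Phi(\zeta_p\Nr_{L/K}(x))$ and the $\sqrt[p]{a}$ inside $\omega$ must conspire so that $\tilde\sigma$ acquires order $p^2$ (and the relation $\tilde\eta\tilde\sigma=\tilde\sigma^{p+1}\tilde\eta$ holds) rather than order $p$ as in the Heisenberg case. Getting the exponent of $\zeta_p$ exactly right — using $\Phi(\zeta_p)=\zeta_p^{\,e^{p-2}+\cdots+1}$ and the fact that this exponent is $\equiv$ the relevant residue mod $p$ — is where a sign or exponent slip would silently convert one group into the other, so I would verify this arithmetic most carefully.
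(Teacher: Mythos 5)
Your overall plan is the paper's: the proof of Theorem 4.2 is carried out by listing the changes to the proof of Theorem 4.1, and every structural step you describe (the Kummer argument for $L(\sqrt[p]{b})/K$, the order-$p^{2}$ computation for $\tilde\sigma$, the relation $\tilde\eta\tilde\sigma=\tilde\sigma^{p+1}\tilde\eta$, the descent via $\bar\tau\omega/\omega^{e},\ \bar\tau b/b^{e}\in{L^{*}}^{p}$, and the identification $M^{\kappa}=F(\alpha)$) is exactly what the paper does. However, the one formula you commit to in the core computation is wrong, and it is wrong at precisely the point you flag as delicate. You set $\tilde\sigma(\sqrt[p]{\omega})=\bigl(\sqrt[p]{b}/\Phi(x\sqrt[p]{a})\bigr)\sqrt[p]{\omega}$. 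For this to define an extension of $\bar{\bar\sigma}$, the $p$-th power of the multiplier must equal $\bar{\bar\sigma}(\omega)/\omega$. A direct computation (using $\bar\sigma\beta/\beta=\Nr_{L/K}(x)/x^{p}$ and $\bar\sigma\sqrt[p]{a}/\sqrt[p]{a}=\zeta_{p}$) gives $\bar{\bar\sigma}(\omega)/\omega=\Phi(\zeta_{p}\Nr_{L/K}(x))/\Phi(x)^{p}=b/\Phi(x)^{p}$, so the correct multiplier is $\sqrt[p]{b}/\Phi(x)$ --- identical to the Heisenberg case. Your multiplier differs from this by $\Phi(\sqrt[p]{a})^{-1}$, whose $p$-th power $\Phi(a)^{-1}$ is neither $1$ nor a root of unity in general; hence your $\tilde\sigma$ is not a well-defined automorphism of $M$ extending $\bar{\bar\sigma}$, and the subsequent order computation collapses: iterating your formula yields $\tilde\sigma^{p}(\sqrt[p]{\omega})=\bigl(\Phi(\zeta_{p})/\Phi(a)\bigr)\sqrt[p]{\omega}$, which is not a root of unity times $\sqrt[p]{\omega}$.

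The conceptual point you half-missed is that the extra $\sqrt[p]{a}$ in $\omega$ and the extra $\zeta_{p}$ in $b$ cancel each other already at the level of $\bar\sigma(\omega)/\omega$: the factor $\Phi(\bar\sigma\sqrt[p]{a}/\sqrt[p]{a})=\Phi(\zeta_{p})$ is exactly what turns $\Phi(\Nr_{L/K}(x))$ into $\Phi(\zeta_{p}\Nr_{L/K}(x))=b$. Consequently $\tilde\sigma$ is defined by the same formula as in Theorem 4.1, and the twist resurfaces only upon iteration: $\tilde\sigma^{p}(\sqrt[p]{\omega})=\frac{b}{\Nr_{L/K}(\Phi(x))}\sqrt[p]{\omega}=\Phi(\zeta_{p})\sqrt[p]{\omega}=\zeta_{p}^{-e^{p-2}}\sqrt[p]{\omega}$, which is what forces $\tilde\sigma$ to have order $p^{2}$ with $\tilde\lambda=\tilde\sigma^{p}$. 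With that correction the rest of your outline goes through as in the paper; for the descent step, note simply that the identity $\bar\tau\Phi(y)/\Phi(y)^{e}=y^{1-e^{p-1}}$ holds for every $y\in L^{*}$ and is applied to $y=\beta\sqrt[p]{a}$ directly (your appeal to ``$a\in K^{*}$'' is not the relevant mechanism, since the object being acted on is $\sqrt[p]{a}\in L$).
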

\begin{proof}
A few changes should be made in the proof of Theorem 4.1. \\
We can assume $\bar{\sigma}:\sqrt[p]{a}\mapsto\zeta_{p}\sqrt[p]{a}$. Next, $\eta{\omega}=\omega$ and,
$$
\frac{\bar{\bar{\sigma}}(\omega)}{\omega}=
\frac{\bar{\sigma}(\Phi(\beta))}{\Phi(\beta)}\cdot\Phi\left(\frac{\bar{\sigma}\sqrt[p]{a}}{\sqrt[p]{a}}\right)=
\frac{\Phi(\zeta_{p}\Nr_{L/K}(x))}{\Phi(x)^{p}}=
\left(\frac{\sqrt[p]{b}}{\Phi(x)}\right)^{p}\in{L(\sqrt[p]{b})^{*}}^{p},
$$
thus, $M/K$ is a Galois extension.
We extend $\bar{\bar{\sigma}}$ and $\eta$ to
$\tilde{\sigma},\tilde{\eta}\in\Gal(M/K)$ as in [6].
$\Gal(M/K)$ is generated by $\tilde{\sigma},\tilde{\eta}$ and $\tilde{\lambda}$,
where $\tilde{\lambda}(\sqrt[p]{\omega})=\zeta_{p}\sqrt[p]{\omega}$ - a generator for
$\Gal(M/L(\sqrt[p]{b}))$.
Clearly, $\tilde{\eta}$ is of order $p$.
Now,
\small
$$
    \tilde{\sigma}^{p}(\sqrt[p]{\omega})=
    \frac{b}{\Phi(x)\bar{\sigma}(\Phi(x))\cdots\bar{\sigma}^{p-1}(\Phi(x))}\sqrt[p]{\omega}=
    \frac{b}{\Phi(\Nr_{L/K}(x))}\sqrt[p]{\omega}=\Phi(\zeta_{p})\sqrt[p]{\omega}=\zeta_{p}^{-e^{p-2}}\sqrt[p]{\omega}.
$$
\normalsize
Hence, $\tilde{\sigma}$ is of order $p^{2}$ and $\tilde{\lambda}=\tilde{\sigma}^{p}$.
Finally, the relation $\tilde{\eta}\tilde{\sigma}=\tilde{\sigma}^{p+1}\tilde{\eta}$ holds.
It follows that the group isomorphism $\varphi:\Gal(M/K)\cong C_{p^{2}}\rtimes C_{p}$,
sending $\tilde{\sigma}\mapsto u$, $\tilde{\eta}\mapsto v$, is a solution isomorphism
for the given embedding problem.
\small
$$
    \frac{\bar{\tau}\omega}{\omega^e}=
    \frac{\bar{\tau}(\Phi(\beta))}{\Phi(\beta)^{e}}\cdot
    \frac{\bar{\tau}(\Phi(\sqrt[p]{a}))}{\Phi(\sqrt[p]{a})^{e}}=
    \left((\beta\sqrt[p]{a})^{\frac{1-e^{p-1}}{p}}\right)^{p}\in {L^{*}}^{p} \ \ \ \ ,
\    \frac{\bar{\tau}b}{b^{e}}=
    \left(\Nr_{L/K}(x)^{\frac{1-e^{p-1}}{p}}\right)^{p}\in {L^{*}}^{p}.
$$
\normalsize
Thus, $M/\QQ$ is a Galois extension.
We extend $\bar{\tau}$ to $\kappa\in\Gal(M/\QQ)$ by
$$
    \kappa(\sqrt[p]{\omega})=(\sqrt[p]{a}\beta)^{(1-e^{p-1})/p}\left({\sqrt[p]{\omega}}\right)^{e}, \ \ \ \ \
    \kappa(\sqrt[p]{b})=\Nr_{L/K}(x)^{(1-e^{p-1})/p}(\sqrt[p]{b})^{e}.
$$
The rest remains exactly as in the previous case.
\end{proof}
We end this section with the following observations.
\begin{theorem}
(a) \ Suppose that $L=\QQ(\zeta_{p^{2}})$. Then $x\in L^{*}$ induces an $H_{p^{3}}$-extension if and only if it induces a $C_{p^{2}}\rtimes C_{p}$-extension (in the ways described in Theorem 4.1 and Theorem 4.2).
\\
(b) \ Suppose that $L\neq\QQ(\zeta_{p^{2}})$. If an element $x\in L^{*}$ does not induce an $H_{p^{3}}$-extension, it necessarily induces a $C_{p^{2}}\rtimes C_{p}$-extension.
\\
(c) \ There are infinitely many elements $x\in L^{*}$ which induce both an $H_{p^{3}}$-extension and a $C_{p^{2}}\rtimes C_{p}$-extension.
\end{theorem}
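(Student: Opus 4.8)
The plan is to reduce all three parts to one structural fact: the two constructions of Theorem 4.1 and Theorem 4.2 differ only by the single multiplicative factor $\Phi(\zeta_{p})$, and whether that factor is a $p^{\Th}$ power in $L$ is decided entirely by whether $L=\QQ(\zeta_{p^{2}})$. Writing $\gamma=\Nr_{L/K}(x)$ and using that $\Phi$ is a homomorphism, we have $\Phi(\zeta_{p}\gamma)=\Phi(\zeta_{p})\,\Phi(\gamma)$, so $x$ induces an $\HH_{p^{3}}$-extension exactly when $\Phi(\gamma)\notin{L^{*}}^{p}$, and a $C_{p^{2}}\rtimes C_{p}$-extension exactly when $\Phi(\zeta_{p})\Phi(\gamma)\notin{L^{*}}^{p}$.

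First I would pin down $\Phi(\zeta_{p})$. Since $\bar\tau|_{K}=\tau:\zeta_{p}\mapsto\zeta_{p}^{e}$, the defining product collapses:
\[
\Phi(\zeta_{p})=\prod_{j=0}^{p-2}\bigl(\zeta_{p}^{e^{j}}\bigr)^{e^{p-2-j}}=\zeta_{p}^{(p-1)e^{p-2}}=\zeta_{p}^{-e^{p-2}}.
\]
Because $e$ is a primitive root modulo $p$, the exponent $-e^{p-2}$ is prime to $p$, so in $L^{*}/{L^{*}}^{p}$ the class of $\Phi(\zeta_{p})$ generates the same subgroup as the class of $\zeta_{p}$; in particular $\Phi(\zeta_{p})\in{L^{*}}^{p}$ if and only if $\zeta_{p}\in{L^{*}}^{p}$. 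The one genuinely arithmetic step is then the dichotomy $\zeta_{p}\in{L^{*}}^{p}\iff L=\QQ(\zeta_{p^{2}})$: if $y^{p}=\zeta_{p}$ for some $y\in L$ then $y$ has exact order $p^{2}$, so $\QQ(\zeta_{p^{2}})=\QQ(y)\subseteq L$, and since $[\QQ(\zeta_{p^{2}}):\QQ]=p(p-1)=[L:\QQ]$ this forces $L=\QQ(\zeta_{p^{2}})$; the converse is immediate from $\zeta_{p^{2}}^{p}=\zeta_{p}$.

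With these in hand the three parts are short. For (a), $L=\QQ(\zeta_{p^{2}})$ gives $\Phi(\zeta_{p})\in{L^{*}}^{p}$, so multiplication by $\Phi(\zeta_{p})$ preserves membership in ${L^{*}}^{p}$; hence $\Phi(\gamma)\notin{L^{*}}^{p}$ if and only if $\Phi(\zeta_{p})\Phi(\gamma)\notin{L^{*}}^{p}$, which is exactly the asserted equivalence. For (b), $L\neq\QQ(\zeta_{p^{2}})$ gives $\Phi(\zeta_{p})\notin{L^{*}}^{p}$; assuming $x$ does not induce $\HH_{p^{3}}$ means $\Phi(\gamma)\in{L^{*}}^{p}$, and then $\Phi(\zeta_{p})\Phi(\gamma)\in{L^{*}}^{p}$ would force $\Phi(\zeta_{p})\in{L^{*}}^{p}$, a contradiction; hence $\Phi(\zeta_{p}\gamma)\notin{L^{*}}^{p}$ and $x$ induces a $C_{p^{2}}\rtimes C_{p}$-extension. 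For (c), the two defining conditions are precisely conditions 1 and 2 of Corollary~\ref{inf}, so the statement is a restatement of that corollary.

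The computations are all routine; the only place demanding care is the key dichotomy, where I must ensure the degree count $[L:\QQ]=p(p-1)$ is used so that the inclusion $\QQ(\zeta_{p^{2}})\subseteq L$ upgrades to an equality. I would present $\Phi(\zeta_{p})=\zeta_{p}^{-e^{p-2}}$ as a preliminary identity, since that computation (already used implicitly in the proof of Theorem 4.2) is exactly what links the two constructions.
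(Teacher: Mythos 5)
Your proposal is correct and follows exactly the paper's route: the paper's entire proof is the observation that $\Phi(\zeta_{p})=\zeta_{p}^{-e^{p-2}}$ settles (a) and (b) and that (c) is Corollary 3.4, and your write-up simply supplies the details the paper leaves implicit (the exponent $-e^{p-2}$ being prime to $p$, and the dichotomy $\zeta_{p}\in{L^{*}}^{p}\iff L=\QQ(\zeta_{p^{2}})$ via the degree count $[L:\QQ]=p(p-1)$). No gaps.
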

\begin{proof}
 (a) and (b) follow from $\Phi(\zeta_{p})=\zeta_{p}^{-e^{p-2}}$. (c) follows from Corollary \ref{inf}.
\end{proof}
\section{Polynomials for the non-abelian groups of order $27$}
Following Ledet, we shall construct $E$-extensions and their polynomials over $\QQ$, induced by the elements $x$ we found in Example 3.6 and Example 3.7.
We replace the primitive root $e=2$ with $e=-1$ -
the homomorphism $\Phi$ does not depend on $e$ when we consider it modulo $3$ (it may not be the case for higher primes).
Clearly, $\kappa\sqrt[3]{\omega}=1/\sqrt[3]{\omega}$, thus $\bar{\tau}\omega=1/\omega$, so $\omega+1/\omega\in F$.
Set $X=\sqrt[3]{\omega}+1/\sqrt[3]{\omega}$. Then $X^{3}-3X=\omega+1/\omega$.
Hence, if $p(X)\in\QQ[X]$ is the minimal polynomial for $\omega+1/\omega$ over $\QQ$, then the splitting field
of $p(X^3-3X)$ over $\QQ$ is $F(\sqrt[3]{\omega}+1/\sqrt[3]{\omega})$, i.e, $p(X^3-3X)$ is an $E$-polynomial over $\QQ$.
\begin{example}
\emph{
Let $x$ be as in Example 3.7. In order to get an $H_{27}$-polynomial over $\QQ$ induced by this $x$, consider
$\omega=\Phi(x^{2}\bar\sigma{x})$. \
Using mathematical software (Maple), we get the following $\HH_{27}$-polynomial over $\QQ$:
$$
p(X^3-3X)=(X^3-3X)^{3}-\frac{3^4}{7^2}(X^3-3X)^2-\frac{3\cdot 37}{7^3}(X^3-3X)+\frac{1489}{7^4}.
$$
}
\end{example}
\begin{example}
\emph{
Due to the presence of the radical $\sqrt[p]{a}$,
explicit extensions for the $C_{p^{2}}\rtimes C_{p}$ are more complicated to describe (unless $L=\QQ(\zeta_{p^2})$).
\\
\indent Let $x$ be as in Example 3.6.
In order to get a $C_{9}\rtimes C_{3}$-polynomial over $\QQ$ induced by this
$x$, consider $\omega=\Phi(x^{2}\bar\sigma{x}\cdot\sqrt[3]{a})$,
where $L=K(\sqrt[3]{a})$. Denote $\delta=\delta_{3}(7)$ and consider
the element
$$
\theta=3\delta^2+3\delta+3\zeta_{3}\delta+\zeta_{3}-4.
$$
The minimal polynomial for $\delta$ over $\QQ$ is $X^3+X^2-2X-1$,
and $\sigma\delta=\delta^2-2$. It follows that
$\bar{\sigma}\theta=\zeta_{3}\theta$. Therefore, we can take
$\sqrt[3]{a}=\theta$. Using mathematical software (Maple), we get
the following $C_{9}\rtimes C_{3}$-polynomial over $\QQ$: $$
p(X^3-3X)=(X^3-3X)^{3}-\frac{2\cdot 3^2\cdot
29}{13^2}(X^3-3X)^2-\frac{3\cdot 5\cdot
373}{13^3}(X^3-3X)+\frac{6791}{13^3\cdot 7}. $$
}
\end{example}
\section{Acknowledgment}
The author is grateful to Jack Sonn, Moshe Roitman and Arne Ledet for useful discussions.
%
%

%
%
\vskip 1cm
\small
\noindent
Oz Ben-Shimol \\
Department of Mathematics \\
University of Haifa \\
Mount Carmel 31905, Haifa, Israel \\
E-mail Address: obenshim@math.haifa.ac.il
\end{document}